\font\chuto=cmbx10 at 16pt \font\chudam=cmbxsl10 \font\chudams=cmbxsl8
\newtheorem{thm}{Theorem}[section]
\newtheorem{cor}[thm]{Corollary}
\newtheorem{prop}[thm]{Proposition}
\theoremstyle{definition}
\newtheorem{rem}[thm]{Remark}
\newtheorem{ex}[thm]{Example}
\newtheorem{conj}[thm]{Conjecture}
\numberwithin{equation}{section}
\newcommand*{\Rset}{\mathbb{R}}
\newcommand*{\Zset}{\mathbb{Z}}
\newcommand*{\Cset}{\mathbb{C}}
\newcommand*{\ltriplevert}{\left|\mkern-2mu\left|\mkern-2mu\left|}
\newcommand*{\rtriplevert}{\right|\mkern-2mu\right|\mkern-2mu\right|}
\newcommand*{\matrixnorm}[1]{\ltriplevert #1\rtriplevert}
\begin{document}

\setcounter{page}{1}

\thispagestyle{plain}
\centerline {~}
\centerline {\bf \chuto  On the eigenvalues of certain}
\medskip
\centerline {\bf \chuto  number-theoretic matrices}

\fancyhead[RE]{\small{\thepage \hfill \textit{M.~Mattila and P.~Haukkanen}}}  
\fancyhead[LO]{\small{\textit{On the eigenvalues of certain number-theoretic matrices} \hfill \thepage}}

\renewcommand{\thefootnote}{\fnsymbol{footnote}}

\vskip.8cm
\centerline {Mika Mattila$^{1,}${\footnote{\textit{Corresponding author}}} and
             Pentti Haukkanen$^{2}$
}

\renewcommand{\thefootnote}{\arabic{footnote}}
\vskip.5cm
\centerline{School of Information Sciences}
\centerline{FI-33014 University of Tampere, Finland}
\vskip.5cm
\centerline{$^1$author1's address}
\centerline{\texttt{mika.mattila@uta.fi}
}
\centerline{$^2$author2's address
}
\centerline{\texttt{pentti.haukkanen@uta.fi}
}

\vskip .5cm

\begin{abstract}
In this paper we study the structure and give bounds for the eigenvalues of the $n\times n$ matrix, which $ij$ entry is $(i,j)^\alpha[i,j]^\beta$, where $\alpha,\beta\in\Rset$, $(i,j)$ is the greatest common divisor of $i$ and $j$ and $[i,j]$ is the least common multiple of $i$ and $j$. Currently only $O$-estimates for the greatest eigenvalue of this matrix can be found in the literature, and the asymptotic behaviour of the greatest and smallest eigenvalue is known in case when $\alpha=\beta$.
\end{abstract}

\medskip

\noindent
{\bf Mathematics Subject Classification:} 15A23, 15A36, 15A60, 11A25

\smallskip
\noindent
{\bf Keywords:} GCD matrix, LCM matrix, Smith's determinant, eigenvalue, matrix norm 




\section{Introduction}

Let $S=\{x_1,x_2,\ldots,x_n\}$ be a set of distinct positive integers, and let $f$ be an arithmetical function. Let $(S)_f$ denote the $n\times n$ matrix having $f$ evaluated at the greatest common divisor $(x_i,x_j)$ of $x_i$ and $x_j$ as its $ij$ entry. More formally, let $((S)_f)_{ij}=f((x_i,x_j))$. Analogously, let $[S]_f$ denote the $n\times n$ matrix having $f$ evaluated at the least common multiple $[x_i,x_j]$ of $x_i$ and $x_j$ as its $ij$ entry. That is, $([S]_f)_{ij}=f([x_i,x_j])$. The matrices $(S)_f$ and $[S]_f$ are referred to as the GCD and LCM matrices on $S$ associated with $f$.

The study of GCD and LCM matrices was initiated by H. J. S. Smith \cite{S} in 1875, when he calculated $\det(S)_f$ in case when $S$ is factor-closed and $\det[S]_f$ in a more special case. Since Smith, numerous papers have been published about GCD and LCM matrices. For general accounts see e.g. \cite{HWS, SC}. There are also various generalizations of GCD and LCM matrices to be found in the literature. The most important ones are the lattice-theoretic generalizations into meet and join matrices, see e.g. \cite{KH}.

Over the years some authors have studied number-theoretic matrices that are neither GCD nor LCM matrices, but very closely related to them. For example, Wintner \cite{W} published results concerning the largest eigenvalue of the $n\times n$ matrix having
\[
\left(\frac{(i,j)}{[i,j]}\right)^\alpha
\]
as its $ij$ entry and subsequently Lindqvist and Seip \cite{LS} investigated the asymptotic behavior of the smallest and largest eigenvalue of the same matrix. More recently Hilberdink \cite{Hi} as well as Berkes and Weber \cite{BW} have studied this same topic from analytical perspective.

Also the norms of GCD, LCM and related matrices have been repeatedly studied in the literature. Altinisik et al. \cite{ATH} investigated the norms of reciprocal LCM matrices, and later Altinisik \cite{A} published a paper about the norms of GCD related matrix. Haukkanen \cite{Ha1, Ha3, Ha2} studied the $n\times n$ matrix having
\[
\frac{(i,j)^r}{[i,j]^s},\quad r,s\in\Rset,
\]
as its $ij$ entry and, among other things, gave $O$-estimates for the $\ell_p$ and maximum row and column sum norms of this matrix. In this paper we study the same class of matrices, although we use a slightly different notation. Let $\alpha,\beta\in\Rset$. Our goal here is to find bounds for the eigenvalues of the $n\times n$ matrix having
\[
(i,j)^\alpha[i,j]^\beta
\]
as its $ij$ entry. In order to do this we use similar techniques as Ilmonen et al \cite{IHM} and Hong and Loewy \cite{HL2}. One of the methods may be considered to originate from Hong and Loewy \cite{HL}. It should be noted that not much is known about the eigenvalues of GCD, LCM and related matrices. In addition to the articles mentioned above there are only a few publication that provides information about the eigenvalues (see e.g. \cite{B, HE}).

\newpage
\section{Preliminaries}

Let $A_n^{\alpha,\beta}$ denote the $n\times n$ matrix, which $ij$ entry is given as
\begin{equation}
(A_n^{\alpha,\beta})_{ij}=(i,j)^\alpha[i,j]^\beta,
\end{equation}
where $\alpha,\beta\in\Rset$. In addition, for every $n\in\Zset^+$ we define the $n\times n$ matrix $E_n$ by
\begin{equation}
(E_n)_{ij}=\left\{ \begin{array}{ll}
1 & \textrm{if}\ j\,|\,i\\
0 & \textrm{otherwise.}\\
\end{array}\right.
\end{equation}
The matrix $E_n$ may be referred to as the incidence matrix of the set $\{1,2,\ldots,n\}$ with respect to the divisibility relation.

Next we define some important arithmetical functions that we need. First of all, let $N^{\alpha-\beta}$ be the function such that $N^{\alpha-\beta}(k)=k^{\alpha-\beta}$ for all $k\in\Zset^+$. In addition, let $J_{\alpha-\beta}$ denote the arithmetical function with 
\begin{equation}
J_{\alpha-\beta}(k)=k^{\alpha-\beta}\prod_{p\,|\,k}\left(1-\frac{1}{p^{\alpha-\beta}}\right)
\end{equation}
for all $k\in\Zset_+.$ This function may be seen as a generalization of the Jordan totient function, and it is easy to see that the function $J_{\alpha-\beta}$ can be written as
\begin{equation}
J_{\alpha-\beta}=N^{\alpha-\beta}*\mu,
\end{equation}
the Dirichlet convolution of $N^{\alpha-\beta}$ and the number-theoretic M\"obius function.
\begin{rem}\label{rejordan}
If $\alpha-\beta>0$, then clearly $J_{\alpha-\beta}(k)>0$ for all $k\in\Zset^+.$
\end{rem}
Before we begin to analyze the eigenvalues of the matrix $A_n^{\alpha,\beta}$ we first need to obtain suitable factorizations for it.

\begin{prop}\label{prop1}
Let $F_n=\mathrm{diag}(1,2,\ldots,n)$ and $D_n=\mathrm{diag}(d_1,d_2,\ldots,d_n),$ where
\begin{equation}
d_i=J_{\alpha-\beta}(i)=(N^{\alpha-\beta}*\mu)(i).
\end{equation}
Then the matrix $A_n^{\alpha,\beta}$ can be written as
\begin{equation}
A_n^{\alpha,\beta}=F_n^\beta E_nD_nE_n^TF_n^\beta.
\end{equation}
\end{prop}

\begin{proof}
Since the $ij$ element of the matrix $E_nD_nE_n^T$ is
\begin{equation}
\sum_{k\,|\,i,j} J_{\alpha-\beta}(k)= \sum_{k\,|\,i,j}(N^{\alpha-\beta}*\mu)(k)=[(N^{\alpha-\beta}*\mu)*\zeta]((i,j))=N^{\alpha-\beta}((i,j))=(i,j)^{\alpha-\beta},
\end{equation}
the $ij$ element of the matrix $F_n^\beta E_nD_nE_n^TF_n^\beta$ is
\begin{equation}
i^\beta(i,j)^{\alpha-\beta}j^\beta=(i,j)^\alpha[i,j]^\beta,
\end{equation}
which is also the $ij$ element of $A_n^{\alpha,\beta}$.
\end{proof}

\begin{rem}\label{redet}
By applying Proposition \ref{prop1} it is easy to see that
\begin{equation}
\det A_n^{\alpha,\beta}=\left(n!\right)^{2\beta}\prod_{k=1}^nJ_{\alpha-\beta}(k)=\left(n!\right)^{2\beta}\prod_{k=1}^n\left(N^{\alpha-\beta}*\mu\right)(k).
\end{equation}
\end{rem}

In case when $\alpha>\beta$ we are able to use a different factorization presented in the following proposition.

\begin{prop}\label{prop2}
Suppose that $\alpha>\beta$. Let $J_{\alpha-\beta}$, $D_n$ and $F_n$ be as in Proposition \ref{prop1}, and let  $B_n$ denote the real $n\times n$ matrix with
\begin{equation}
(B_n)_{ij}=\left\{ \begin{array}{ll}
\sqrt{J_{\alpha-\beta}(j)} & \textrm{if}\ j\,|\,i\\
0 & \textrm{otherwise.}\\
\end{array}\right.
\end{equation}

Then the matrix $A_n^{\alpha,\beta}$ can be written as
\begin{equation}
A_n^{\alpha,\beta}=(F_n^\beta B_n)(F_n^\beta B_n)^T=(F_n^\beta E_nD_n^{\frac{1}{2}})(F_n^\beta E_nD_n^{\frac{1}{2}})^T.
\end{equation}
\end{prop}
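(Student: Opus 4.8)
The plan is to reduce everything to the factorization already established in Proposition \ref{prop1}. The essential role of the hypothesis $\alpha>\beta$ is to guarantee, via Remark \ref{rejordan}, that $J_{\alpha-\beta}(k)>0$ for every $k\in\Zset^+$; this is exactly what makes the real square roots $\sqrt{J_{\alpha-\beta}(j)}$ in the definition of $B_n$ meaningful and lets us form the real diagonal matrix $D_n^{\frac12}=\mathrm{diag}(\sqrt{d_1},\ldots,\sqrt{d_n})$ satisfying $D_n^{\frac12}D_n^{\frac12}=D_n$.

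First I would verify the relation $B_n=E_nD_n^{\frac12}$ by a direct entrywise computation. Since $D_n^{\frac12}$ is diagonal, the $ij$ entry of $E_nD_n^{\frac12}$ equals $(E_n)_{ij}\sqrt{d_j}$, which is $\sqrt{J_{\alpha-\beta}(j)}$ when $j\,|\,i$ and $0$ otherwise --- precisely the entries of $B_n$. This identity immediately yields the second equality in the statement, namely $F_n^\beta B_n=F_n^\beta E_nD_n^{\frac12}$, so the two claimed factorizations coincide.

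It then remains to establish the first equality. Here I would simply expand the product, using that the diagonal matrices $F_n^\beta$ and $D_n^{\frac12}$ are symmetric, to obtain
\[
(F_n^\beta B_n)(F_n^\beta B_n)^T=F_n^\beta B_nB_n^TF_n^\beta=F_n^\beta E_nD_n^{\frac12}D_n^{\frac12}E_n^TF_n^\beta=F_n^\beta E_nD_nE_n^TF_n^\beta.
\]
By Proposition \ref{prop1} the right-hand side is exactly $A_n^{\alpha,\beta}$, which completes the argument.

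I expect no serious obstacle beyond bookkeeping: the whole content is the symmetric (Cholesky-type) splitting $D_n=D_n^{\frac12}D_n^{\frac12}$, which is available precisely because positivity of $J_{\alpha-\beta}$ makes $D_n$ positive definite. The only point requiring genuine care is to confirm that the hypothesis $\alpha>\beta$ is really needed here --- without it some value $J_{\alpha-\beta}(k)$ could be negative, $D_n^{\frac12}$ would cease to be real, and $B_n$ would not be a real matrix --- and with that caveat the proposition follows mechanically from Proposition \ref{prop1}.
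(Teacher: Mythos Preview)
Your argument is correct and essentially matches the paper's proof. The only cosmetic difference is the order: the paper first computes the $ij$ entry of $B_nB_n^T$ directly (repeating the Dirichlet-convolution calculation from Proposition~\ref{prop1}) to obtain the first equality and then notes $B_n=E_nD_n^{1/2}$ for the second, whereas you establish $B_n=E_nD_n^{1/2}$ first and invoke Proposition~\ref{prop1} to avoid redoing that sum.
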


\begin{proof}
First we observe that the $ij$ element of $B_nB_n^T$ is equal to
\begin{equation}
\sum_{k\,|\,i,j}J_{\alpha-\beta}(k)=\sum_{k\,|\,i,j}(N^{\alpha-\beta}*\mu)(k)=[(N^{\alpha-\beta}*\mu)*\zeta]((i,j))=N^{\alpha-\beta}((i,j))=(i,j)^{\alpha-\beta}.
\end{equation}
Thus the $ij$ element of $(F_n^\beta B_n)(F_n^\beta B_n)^T=F_n^\beta(B_nB_n^T)F_n^\beta$ is
\begin{equation}
i^\beta(i,j)^{\alpha-\beta}j^\beta=(i,j)^\alpha[i,j]^\beta,
\end{equation}
which is also the $ij$ element of the matrix $A_n^{\alpha,\beta}$. Thus we have proven the first equality. The second equality follows from the fact that the matrix $B_n$ can be written as $B_n=E_nD_n^{\frac{1}{2}}.$
\end{proof}

In order to obtain bounds for the eigenvalues of the matrix $A_n^{\alpha,\beta}$ we find out the eigenvalues of the matrix $E_n^TE_n$ for different $n\in\Zset^+.$ The smallest eigenvalue of this matrix is denoted by $t_n$ and the largest by $T_n$. Table \ref{table} shows the values of the constants $t_n$ and $T_n$ for small values of $n$. The $ij$ element of the matrix $E_n^TE_n$ is in fact equal to
\begin{equation}
\left|\{k\in\Zset^+\ \big|\ k\leq n,\ i\,|\,k\ \text{and}\ j\,|\,k\}\right|=\left\lfloor\frac{n}{[i,j]}\right\rfloor,
\end{equation}
the greatest integer that is less than or equal to $\frac{n}{[i,j]}$. This same matrix is also studied by Bege \cite{Be} when he considers it as an example.

\begin{table}[ht!]
	\centering
	
		\begin{tabular}{|c|c|c|c|c|c|c|c|c|}
			\hline
    $n$  & $t_n$ & $T_n$ & $n$  & $t_n$ & $T_n$ & $n$  & $t_n$ & $T_n$\\
    \hline
    $2$ & $0.381966$ & $2.61803$ & $15$ & $0.0616080$ & $23.6243$ & $28$ & $0.0411874$ & $47.1773$\\
    $3$ & $0.267949$ & $3.73205$ & $16$ & $0.0616079$ & $26.1117$ & $29$ & $0.0401315$ & $47.7330$\\
    $4$ & $0.252762$ & $5.78339$ & $17$ & $0.0591935$ & $26.70841$ & $30$ & $0.0343360$ & $51.4915$\\
    $5$ & $0.204371$ & $6.60665$ & $18$ & $0.0584344$ & $29.8007$ & $31$ & $0.0336797$ & $52.0305$\\
    $6$ & $0.129425$ & $9.21230$ & $19$ & $0.0562263$ & $30.3787$ & $32$ & $0.0336797$ & $54.6056$\\
    $7$ & $0.118823$ & $9.92035$ & $20$ & $0.0550575$ & $33.2123$ & $33$ & $0.0322295$ & $55.7392$\\
    $8$ & $0.118764$ & $12.2892$ & $21$ & $0.0505600$ & $34,4522$ & $34$ & $0.0306762$ & $57.2482$\\
    $9$ & $0.116597$ & $13.4520$ & $22$ & $0.0466545$ & $36.0618$ & $35$ & $0.0295618$ & $58.2226$\\
    $10$ & $0.0930874$ & $15.4428$ & $23$ & $0.0452547$ & $36.6470$ & $36$ & $0.0295298$ & $62.7258$\\
    $11$ & $0.087262$ & $16.113$ & $24$ & $0.0452214$ & $41.0878$ & $37$ & $0.0289990$ & $63.2500$\\
    $12$ & $0.087262$ & $16.113$ & $25$ & $0.0451569$ & $41.8465$ & $38$ & $0.0277260$ & $64.7226$\\
    $13$ & $0.0791480$ & $20.4160$ & $26$ & $0.0419049$ & $43.3920$ & $39$ & $0.0267584$ & $65.8548$\\
    $14$ & $0.0681283$ & $22.1909$ & $27$ & $0.0419033$ & $44.6343$ & $40$ & $0.0267526$ & $69.2188$\\
    \hline
		\end{tabular}
		\caption{The constants $t_n$ and $T_n$ for $n\leq40$.}\label{table}
\end{table}

As can be seen from Table \ref{table}, the sequences $(t_n)_{n=1}^\infty$ and $(T_n)_{n=1}^\infty$ seem to possess certain monotonic behavior. This encourages us to present the following conjecture.

\begin{conj}\label{conj}
For every $n\in\Zset^+$ we have
\begin{equation}
t_{n+1}\leq t_n\quad\text{and}\quad T_n\leq T_{n+1}.
\end{equation}
\end{conj}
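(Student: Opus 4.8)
The plan is to exploit the recursive block structure relating $E_{n+1}$ to $E_n$ and reduce both inequalities to the elementary fact that, for a symmetric matrix, the largest eigenvalue of a leading principal submatrix is at most the largest eigenvalue of the whole matrix (a one-sided form of Cauchy interlacing, equivalently a Rayleigh-quotient restriction). Since $(E_n)_{ij}=1$ only when $j\mid i$, the matrix $E_n$ is lower triangular with unit diagonal; hence it is invertible and $M_n:=E_n^{T}E_n$ is symmetric positive definite, so $t_n>0$ and $T_n$ really are its smallest and largest eigenvalues. Passing from $n$ to $n+1$ merely appends one row and one column: writing $v\in\Rset^{n}$ for the vector with $v_j=1$ when $j\mid(n+1)$ and $v_j=0$ otherwise, the last column of $E_{n+1}$ is $(0,\ldots,0,1)^{T}$ (because $(n+1)\mid i$ forces $i=n+1$) and its last row lists the divisors of $n+1$, so
\[
E_{n+1}=\begin{pmatrix} E_n & 0\\ v^{T} & 1\end{pmatrix}.
\]

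For the largest eigenvalue I would argue variationally. Multiplying out the block form gives
\[
M_{n+1}=\begin{pmatrix} M_n+vv^{T} & v\\ v^{T} & 1\end{pmatrix}.
\]
Let $x\in\Rset^{n}$ be a unit eigenvector of $M_n$ for $T_n$ and set $\tilde x=(x,0)^{T}\in\Rset^{n+1}$. Then
\[
\tilde x^{T}M_{n+1}\tilde x=x^{T}\bigl(M_n+vv^{T}\bigr)x=T_n+(v^{T}x)^2\ge T_n,
\]
and since $\lVert\tilde x\rVert=1$ the Rayleigh characterization gives $T_{n+1}\ge\tilde x^{T}M_{n+1}\tilde x\ge T_n$.

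The smallest eigenvalue is the part that needs the one genuine idea. Because the leading $n\times n$ block of $M_{n+1}$ is $M_n+vv^{T}$ rather than $M_n$ itself, one cannot control $t_n$ from below by looking at $M_{n+1}$ directly. Instead I would pass to the inverses, where smallest and largest eigenvalues interchange. From the block form one reads off $E_{n+1}^{-1}=\left(\begin{smallmatrix} E_n^{-1} & 0\\ -v^{T}E_n^{-1} & 1\end{smallmatrix}\right)$, and therefore
\[
M_{n+1}^{-1}=E_{n+1}^{-1}E_{n+1}^{-T}=\begin{pmatrix} M_n^{-1} & -M_n^{-1}v\\ -v^{T}M_n^{-1} & 1+v^{T}M_n^{-1}v\end{pmatrix}.
\]
Now $M_n^{-1}$ is \emph{exactly} the leading $n\times n$ block of $M_{n+1}^{-1}$. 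Taking a unit eigenvector $z$ of $M_n^{-1}$ for its largest eigenvalue $1/t_n$ and embedding it as $\tilde z=(z,0)^{T}$ gives $\tilde z^{T}M_{n+1}^{-1}\tilde z=z^{T}M_n^{-1}z=1/t_n$, whence $\lambda_{\max}(M_{n+1}^{-1})\ge 1/t_n$. Since $M_{n+1}$ is positive definite, $\lambda_{\max}(M_{n+1}^{-1})=1/t_{n+1}$, so this reads $1/t_{n+1}\ge 1/t_n$, i.e. $t_{n+1}\le t_n$.

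The only delicate step is recognizing that the smallest-eigenvalue inequality should be transported to the inverse matrices, where $M_n^{-1}$ reappears as an honest leading principal submatrix of $M_{n+1}^{-1}$; everything else is routine verification of the two block identities (in particular the cancellation that produces $M_n^{-1}$ as the leading block of $M_{n+1}^{-1}$) together with the Rayleigh characterization of extreme eigenvalues. Once the factorization of $E_{n+1}$ is recorded, both claimed monotonicities fall out with no further obstacle.
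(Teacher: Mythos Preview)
Your argument is correct, and in fact it settles what the paper leaves open: the paper does \emph{not} prove this statement at all. It is stated as a conjecture, supported only by numerical verification for $n\le 100$. So there is no ``paper's own proof'' to compare against; you have supplied one.

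Both block identities check. The last column of $E_{n+1}$ is $(0,\ldots,0,1)^{T}$ precisely because no $i\le n$ is divisible by $n+1$, and this is what makes the factorization $E_{n+1}=\left(\begin{smallmatrix}E_n&0\\ v^{T}&1\end{smallmatrix}\right)$ work. The inequality $T_{n+1}\ge T_n$ then follows immediately from the Rayleigh quotient, since the leading $n\times n$ block $M_n+vv^{T}$ already dominates $M_n$. The real content is the second inequality, and your idea of passing to inverses is exactly right: from the block form of $E_{n+1}^{-1}$ one computes $M_{n+1}^{-1}=E_{n+1}^{-1}E_{n+1}^{-T}$ and finds that its leading $n\times n$ block is \emph{exactly} $M_n^{-1}$ (the cross terms cancel because the upper-right block of $E_{n+1}^{-1}$ is zero). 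Cauchy interlacing, or equivalently the Rayleigh quotient restricted to the first $n$ coordinates, then gives $\lambda_{\max}(M_{n+1}^{-1})\ge\lambda_{\max}(M_n^{-1})$, i.e.\ $1/t_{n+1}\ge 1/t_n$, which is $t_{n+1}\le t_n$.

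One small remark: it is worth stressing that the appearance of $M_n^{-1}$ as a principal submatrix of $M_{n+1}^{-1}$ is not a general phenomenon for positive definite matrices but depends on the particular zero pattern of $E_{n+1}$; your block computation is therefore essential, not just a convenience.
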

Calculations show that this conjecture holds for $n=2,\ldots,100$.
\section{Estimations for the eigenvalues}

First we assume that $\alpha>\beta$. From Proposition \ref{prop2} it follows that in this case the matrix $A_n^{\alpha,\beta}$ is positive definite, and thus we are able to give a lower bound for the smallest eigenvalue of $A_n^{\alpha,\beta}$.  

\begin{thm}\label{thm1}
Let $\alpha>\beta$ and let $\lambda_1^{n,\alpha,\beta}$ denote the smallest eigenvalue of the matrix $A_n^{\alpha,\beta}$. Then
\begin{equation}
\lambda_1^{n,\alpha,\beta}\geq t_n\cdot \min_{1\leq i\leq n}J_{\alpha-\beta}(i)\cdot\min\{1,n^{2\beta}\}>0.
\end{equation}
\end{thm}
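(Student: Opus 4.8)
The plan is to bound the Rayleigh quotient of $A_n^{\alpha,\beta}$ from below. Since $\alpha>\beta$, Proposition \ref{prop2} guarantees that $A_n^{\alpha,\beta}$ is symmetric and positive definite, so
\[
\lambda_1^{n,\alpha,\beta}=\min_{x\neq 0}\frac{x^TA_n^{\alpha,\beta}x}{x^Tx},
\]
and it is enough to show that $x^TA_n^{\alpha,\beta}x$ is at least the claimed constant times $x^Tx$ for every nonzero $x$. I would feed in the factorization $A_n^{\alpha,\beta}=F_n^\beta E_nD_nE_n^TF_n^\beta$ from Proposition \ref{prop1}, put $u=F_n^\beta x$, and rewrite the numerator as $(E_n^Tu)^TD_n(E_n^Tu)$.

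From here the argument strips off the three diagonal and structural factors in turn. As $D_n$ is diagonal with entries $J_{\alpha-\beta}(i)$, all strictly positive by Remark \ref{rejordan} because $\alpha-\beta>0$, we get $(E_n^Tu)^TD_n(E_n^Tu)\ge\bigl(\min_{1\le i\le n}J_{\alpha-\beta}(i)\bigr)\,\lVert E_n^Tu\rVert^2$. Writing $\lVert E_n^Tu\rVert^2=u^TE_nE_n^Tu$, the next step is the estimate $u^TE_nE_n^Tu\ge t_n\,\lVert u\rVert^2$.

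The one place that calls for a small remark is why the constant appearing here is exactly $t_n$, the smallest eigenvalue of $E_n^TE_n$, and not that of $E_nE_n^T$. This is resolved by noting that $E_n$ is lower triangular with unit diagonal, hence invertible, so $E_nE_n^T$ and $E_n^TE_n$ are square symmetric positive definite matrices sharing the same spectrum; in particular $\lambda_{\min}(E_nE_n^T)=t_n>0$.

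It remains to deal with $\lVert u\rVert^2=\lVert F_n^\beta x\rVert^2=\sum_{i=1}^n i^{2\beta}x_i^2$. A brief case check on the sign of $\beta$ shows that $\min_{1\le i\le n} i^{2\beta}=\min\{1,n^{2\beta}\}$, the minimum being attained at $i=1$ when $\beta\ge 0$ and at $i=n$ when $\beta<0$; consequently $\lVert F_n^\beta x\rVert^2\ge\min\{1,n^{2\beta}\}\,x^Tx$. Concatenating the three inequalities yields the desired lower bound, and strictness follows since each of the three factors is strictly positive. I expect no real obstacle: the only points needing care are the spectral coincidence of $E_nE_n^T$ and $E_n^TE_n$ and the elementary minimization of $i^{2\beta}$.
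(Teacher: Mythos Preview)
Your argument is correct. The paper reaches the same bound by a dual route: rather than bounding the Rayleigh quotient from below, it writes $\lambda_1^{n,\alpha,\beta}=1/\matrixnorm{(A_n^{\alpha,\beta})^{-1}}_S$ and bounds the spectral norm of the inverse from above by submultiplicativity, using the factorization of Proposition~\ref{prop2}. The three norm factors $\matrixnorm{(F_n^\beta)^{-1}}_S^2$, $\matrixnorm{(E_n^TE_n)^{-1}}_S$ and $\matrixnorm{D_n^{-1}}_S$ that arise there are exactly the reciprocals of your three successive lower bounds, so the two proofs are mirror images of one another. Your quadratic-form approach is a bit more elementary in that it never forms any inverse matrix; the paper's approach, on the other hand, sidesteps your remark that $E_nE_n^T$ and $E_n^TE_n$ share a spectrum, since the identity $\matrixnorm{E_n^{-1}}_S\,\matrixnorm{(E_n^{-1})^T}_S=\matrixnorm{(E_n^TE_n)^{-1}}_S=1/t_n$ produces $t_n$ directly. (Incidentally, that spectral coincidence holds for any square $E_n$, invertible or not, since $MM^T$ and $M^TM$ always have identical eigenvalues.)
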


\begin{proof}
By applying Proposition \ref{prop2} we have
\begin{equation}
A_n^{\alpha,\beta}=(F_n^\beta E_nD_n^{\frac{1}{2}})(F_n^\beta E_nD_n^{\frac{1}{2}})^T.
\end{equation}
By applying Remark \ref{rejordan} and \ref{redet} we deduce that $\det A_n^{\alpha,\beta}\neq0$ and furthermore that $A_n^{\alpha,\beta}$ is invertible. Thus the matrices $A_n^{\alpha,\beta}$ and $(A_n^{\alpha,\beta})^{-1}$ are real symmetric and positive definite and therefore the greatest eigenvalue of $(A_n^{\alpha,\beta})^{-1}$ is also the inverse of the smallest eigenvalue of $A_n^{\alpha,\beta}$. In addition, the greatest eigenvalue of $(A_n^{\alpha,\beta})^{-1}$ is equal to $\matrixnorm{(A_n^{\alpha,\beta})^{-1}}_S,$ the spectral norm of the matrix $(A_n^{\alpha,\beta})^{-1}$. Thus
\begin{equation}
\lambda_1^{n,\alpha,\beta}=\frac{1}{\matrixnorm{(A_n^{\alpha,\beta})^{-1}}_S}=\frac{1}{\matrixnorm{[(F_n^\beta E_nD_n^{\frac{1}{2}})(F_n^\beta E_nD_n^{\frac{1}{2}})^T]^{-1}}_S}.
\end{equation}
By applying the submultiplicativity of the spectral norm we obtain
\begin{align}\label{eq:norm}
&\matrixnorm{[(F_n^\beta E_nD_n^{\frac{1}{2}})(F_n^\beta E_nD_n^{\frac{1}{2}})^T]^{-1}}_S=\matrixnorm{(F_n^\beta)^{-1}(E_n^T)^{-1}D_n^{-1}E_n^{-1}(F_n^\beta)^{-1}}_S\\
&\leq \matrixnorm{(F_n^\beta)^{-1}}_S^2\cdot\left(\matrixnorm{E_n^{-1}}_S\cdot\matrixnorm{(E_n^{-1})^T}_S\right)\cdot\matrixnorm{D_n^{-1}}_S=\matrixnorm{(F_n^\beta)^{-1}}_S^2\cdot\matrixnorm{(E_n^TE_n)^{-1}}_S\cdot\matrixnorm{D_n^{-1}}_S.\notag
\end{align}
Since $J_{\alpha-\beta}(i)>0$ for all $i=1,\ldots,n$ we have
\begin{align}\label{eq:D}
\matrixnorm{D_n^{-1}}_S&=\matrixnorm{\text{diag}\left(\frac{1}{(J_{\alpha-\beta}(1)},\frac{1}{J_{\alpha-\beta}(2)},\ldots,\frac{1}{J_{\alpha-\beta}(n)}\right)}_S\notag\\
&=\max_{1\leq i\leq n}\frac{1}{J_{\alpha-\beta}(i)}=\frac{1}{\min_{1\leq i\leq n}J_{\alpha-\beta}(i)},
\end{align}
and similarly
\begin{equation}\label{eq:F}
\matrixnorm{(F_n^\beta)^{-1}}_S^2=\matrixnorm{\text{diag}\left(\frac{1}{1^\beta},\frac{1}{2^\beta},\ldots,\frac{1}{n^\beta}\right)}_S^2
=\max_{1\leq i\leq n}\frac{1}{i^{2\beta}}=\frac{1}{\min_{1\leq i\leq n}i^{2\beta}}=\frac{1}{\min\{1,n^{2\beta}\}}.
\end{equation}
For the spectral norm of the matrix $(E_n^TE_n)^{-1}$ we have
\begin{equation}\label{eq:E}
\matrixnorm{(E_n^TE_n)^{-1}}_S=\frac{1}{t_n}.
\end{equation}
Now by combining equations \eqref{eq:D}, \eqref{eq:F} and \eqref{eq:E} with \eqref{eq:norm} we obtain
\begin{align}
\lambda_1^{n,\alpha,\beta}&=\frac{1}{\matrixnorm{[(F_n^\beta E_nD_n^{\frac{1}{2}})(F_n^\beta E_nD_n^{\frac{1}{2}})^T]^{-1}}_S}\geq\frac{1}{\matrixnorm{(F_n^\beta)^{-1}}_S^2\cdot\matrixnorm{(E_n^TE_n)^{-1}}_S\cdot\matrixnorm{D_n^{-1}}_S}\notag\\
&=t_n\cdot \min_{1\leq i\leq n}J_{\alpha-\beta}(i)\cdot\min\{1,n^{2\beta}\},
\end{align}
which completes the proof.
\end{proof}

\begin{rem}\label{remalpha}
For $\alpha-\beta\geq1$ we have $\min_{1\leq i\leq n}J_{\alpha-\beta}(i)=1$. In addition, if $\beta\geq0$, then $\min\{1,n^{2\beta}\}=1$ and we simply have
\begin{equation}
\lambda_1^{n,\alpha,\beta}\geq t_n.
\end{equation}
In particular, this holds for the so called power GCD matrix $A_n^{\alpha,\beta}$ in which $\beta=0$ and $\alpha>1$ and for the matrix $A_n^{1,0}$, which is the usual GCD matrix of the set $\{1,2,\ldots,n\}$.

On the other hand, if $\beta<0$, then $\min\{1,n^{2\beta}\}=n^{2\beta}$ and
\begin{equation}
\lambda_1^{n,\alpha,\beta}\geq t_n\cdot n^{2\beta}.
\end{equation}
For example, when considering the so called reciprocal matrix $A_n^{1,-1}$, Theorem \ref{thm1} yields this bound.
\end{rem}

\begin{ex}
Let $n=6$, $\alpha=2$ and $\beta=\frac{1}{2}$. Then we have
\begin{equation}
A_6^{2,\frac{1}{2}}=\left[ \begin{array}{cccccc}
1 & \sqrt{2} & \sqrt{3} & 2 & \sqrt{5} & \sqrt{6} \\
\sqrt{2} & 4\sqrt{2} & \sqrt{6} & 8 & \sqrt{10} & 4\sqrt{6} \\
\sqrt{3} & \sqrt{6} & 9\sqrt{3} & 2\sqrt{3} & \sqrt{15} & 9\sqrt{6} \\
2 & 8 & 2\sqrt{3} & 32 & 2\sqrt{5} & 8\sqrt{3} \\
\sqrt{5} & \sqrt{10} & \sqrt{15} & 2\sqrt{5} & 25\sqrt{5} & \sqrt{30} \\
\sqrt{6} & 4\sqrt{6} & 9\sqrt{6} & 8\sqrt{3} & \sqrt{30} & 36\sqrt{6} \\
\end{array} \right],
\end{equation}
and by Theorem \ref{thm1} and Remark \ref{remalpha} we have $\lambda_1^{6,2,\frac{1}{2}}\geq t_6\approx0.129425$. Direct calculation shows that in fact $\lambda_1^{6,2,\frac{1}{2}}\approx 0.459959$.
\end{ex}

\begin{ex}
Let $n=5$, $\alpha=-2$ and $\beta=-3$. This time we have
\begin{equation}
A_5^{-2,-3}=\left[ \begin{array}{ccccc}
1 & \frac{1}{8} & \frac{1}{27} & \frac{1}{64} & \frac{1}{125} \\
\frac{1}{8} & \frac{1}{32} & \frac{1}{216} & \frac{1}{256} & \frac{1}{1000} \\
\frac{1}{27} & \frac{1}{216} & \frac{1}{243} & \frac{1}{1728} & \frac{1}{3375} \\
\frac{1}{64} & \frac{1}{256} & \frac{1}{1728} & \frac{1}{1024} & \frac{1}{8000} \\
\frac{1}{125} & \frac{1}{1000} & \frac{1}{3375} & \frac{1}{8000} & \frac{1}{7776} \\
\end{array} \right],
\end{equation}
$\min_{1\leq i\leq n}J_1(i)=1$ and $\min\{1,5^{2\cdot(-3)}\}=\frac{1}{15625}$. Thus by Theorem \ref{thm1} we have
$$\lambda_1^{5,-2,-3}\geq t_5\cdot1\cdot\frac{1}{15625}\approx1.30797\cdot10^{-5},$$ although a direct calculation gives $\lambda_1^{5,-2,-3}\approx6.45967\cdot10^{-5}$.
\end{ex}

In Theorem \ref{thm1} we assume that $\alpha>\beta.$ Next we are going to prove a more robust theorem which can be used in any circumstances, but as a downside it also gives a bit more broad bounds for the eigenvalues of the matrix $A_n^{\alpha,\beta}$.

\begin{thm}\label{thm2}
Every eigenvalue of the matrix $A_n^{\alpha,\beta}$ lies in the real interval
\begin{equation}
\Big[2\min\{1,n^{\alpha+\beta}\}-T_n\cdot\max\{1,n^{2\beta}\}\cdot\max_{1\leq i\leq n}|J_{\alpha-\beta}(i)|\ ,\ T_n\cdot\max\{1,n^{2\beta}\}\cdot\max_{1\leq i\leq n}|J_{\alpha-\beta}(i)|\Big].
\end{equation}
\end{thm}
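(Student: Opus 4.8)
Throughout write $U:=T_n\cdot\max\{1,n^{2\beta}\}\cdot\max_{1\le i\le n}|J_{\alpha-\beta}(i)|$ for the right-hand endpoint of the interval. Since the $ij$ entry $(i,j)^\alpha[i,j]^\beta$ is symmetric in $i$ and $j$, the matrix $A_n^{\alpha,\beta}$ is real and symmetric, so all of its eigenvalues are real and the statement about an interval is meaningful. For the upper bound I would argue exactly as in the proof of Theorem \ref{thm1}, but on $A_n^{\alpha,\beta}$ itself rather than on its inverse: starting from the factorization $A_n^{\alpha,\beta}=F_n^\beta E_nD_nE_n^TF_n^\beta$ of Proposition \ref{prop1} and using submultiplicativity of the spectral norm,
\[
\matrixnorm{A_n^{\alpha,\beta}}_S\le\matrixnorm{F_n^\beta}_S^2\cdot\matrixnorm{E_n}_S^2\cdot\matrixnorm{D_n}_S=\max\{1,n^{2\beta}\}\cdot T_n\cdot\max_{1\le i\le n}|J_{\alpha-\beta}(i)|=U,
\]
where the three factors are obtained by the same diagonal computations as in the proof of Theorem \ref{thm1} (applied now to $F_n^\beta$, $D_n$ and $E_n^TE_n$ rather than to their inverses), together with $\matrixnorm{E_n}_S^2=\lambda_{\max}(E_n^TE_n)=T_n$. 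Since every eigenvalue of a symmetric matrix is bounded in absolute value by its spectral norm, this already gives the right-hand bound.

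For the lower bound the plan is to peel off the diagonal. Let $\Delta=\mathrm{diag}\big((A_n^{\alpha,\beta})_{11},\ldots,(A_n^{\alpha,\beta})_{nn}\big)$; the diagonal entries are $(i,i)^\alpha[i,i]^\beta=i^{\alpha+\beta}$, so $\lambda_{\min}(\Delta)=\min\{1,n^{\alpha+\beta}\}=:m$. Writing $A_n^{\alpha,\beta}=2\Delta-(2\Delta-A_n^{\alpha,\beta})$ and applying Weyl's inequality $\lambda_{\min}(X+Y)\ge\lambda_{\min}(X)+\lambda_{\min}(Y)$ with $X=2\Delta$ and $Y=A_n^{\alpha,\beta}-2\Delta$ gives
\[
\lambda_{\min}(A_n^{\alpha,\beta})\ge 2\lambda_{\min}(\Delta)-\lambda_{\max}(2\Delta-A_n^{\alpha,\beta})=2m-\lambda_{\max}(2\Delta-A_n^{\alpha,\beta}),
\]
so the whole problem reduces to showing $\lambda_{\max}(2\Delta-A_n^{\alpha,\beta})\le U$.

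The key observation is that $2\Delta-A_n^{\alpha,\beta}$ keeps the diagonal of $A_n^{\alpha,\beta}$ and merely reverses the signs of the off-diagonal entries; since every entry of $A_n^{\alpha,\beta}$ is strictly positive, the entrywise absolute value satisfies $|2\Delta-A_n^{\alpha,\beta}|=A_n^{\alpha,\beta}$. I would then invoke the classical monotonicity of the spectral radius under entrywise absolute values, $\rho(H)\le\rho(|H|)$, together with the fact that a symmetric matrix has spectral radius equal to its spectral norm, to conclude
\[
\lambda_{\max}(2\Delta-A_n^{\alpha,\beta})\le\rho(2\Delta-A_n^{\alpha,\beta})\le\rho(A_n^{\alpha,\beta})=\matrixnorm{A_n^{\alpha,\beta}}_S\le U.
\]
Combining this with the Weyl estimate yields $\lambda_{\min}(A_n^{\alpha,\beta})\ge 2m-U$, and with the upper bound every eigenvalue then lies in the stated interval.

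I expect this last step to be the main obstacle. The tempting naive estimate $\matrixnorm{2\Delta-A_n^{\alpha,\beta}}_S\le 2\matrixnorm{\Delta}_S+\matrixnorm{A_n^{\alpha,\beta}}_S$ is useless, because $\matrixnorm{\Delta}_S=\max\{1,n^{\alpha+\beta}\}$ can be large, and one cannot undo the off-diagonal sign change by conjugating with a $\pm1$ signature matrix once $n\ge3$. The spectral-radius inequality is exactly what converts \emph{same magnitudes} into \emph{no larger spectral radius}, and the only points to verify carefully are that $\rho(H)\le\rho(|H|)$ holds for every real square matrix and that all entries of $A_n^{\alpha,\beta}$ are positive, which is immediate from $(i,j),[i,j]\in\Zset^+$ and $\alpha,\beta\in\Rset$.
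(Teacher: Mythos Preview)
Your proof is correct, and it takes a genuinely different route from the paper's. The paper does not split the argument into upper and lower bounds; instead it invokes a single Ger\v{s}gorin-type result (Theorem~8.2.12 of Horn--Johnson): it observes the entrywise inequality $0\le A_n^{\alpha,\beta}\le F_n^\beta E_n|D_n|E_n^T F_n^\beta=:M$, concludes that every eigenvalue lies in $\bigcup_k\{z:|z-k^{\alpha+\beta}|\le\rho(M)-k^{\alpha+\beta}\}$, and then bounds $\rho(M)$ by the same submultiplicativity computation you use, arriving at the same $U$. Your argument is more elementary: the upper bound comes directly from $\matrixnorm{A_n^{\alpha,\beta}}_S\le U$ without ever passing to the dominating matrix $M$, and the lower bound is assembled by hand from Weyl's inequality together with the sign-flip identity $|2\Delta-A_n^{\alpha,\beta}|=A_n^{\alpha,\beta}$ and the Perron--Frobenius monotonicity $\rho(H)\le\rho(|H|)$. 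Both proofs ultimately rest on that monotonicity (the Horn--Johnson theorem is proved from it), but your version avoids quoting the specific black-box result and makes the mechanism for the lower bound explicit; the paper's version has the advantage of producing both endpoints in one stroke.
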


\begin{proof}
Let the matrices $E_n$, $D_n$ and $F_n$ be as above. In addition, we denote
\begin{equation}
\Lambda_n=|D_n|^{\frac{1}{2}}=\text{diag}\left(\sqrt{|J_{\alpha-\beta}(1)|},\sqrt{|J_{\alpha-\beta}(2)|},\ldots,\sqrt{|J_{\alpha-\beta}(n)|}\right).
\end{equation}
By applying Proposition \ref{prop1} we obtain
\begin{equation}
A_n^{\alpha,\beta}=F_n^\beta E_nD_nE_n^TF_n^\beta,
\end{equation}
and next we observe that
\begin{equation}
0_{n\times n}\leq A_n^{\alpha,\beta}\leq F_n^\beta E_n|D_n|E_n^TF_n^\beta=F_n^\beta E_n\Lambda_n\Lambda_n^TE_n^T(F_n^\beta)^T=(F_n^\beta E_n\Lambda_n)(F_n^\beta E_n\Lambda_n)^T,
\end{equation}
where $\leq$ is understood componentwise. By Theorem 8.2.12 in \cite{HJ} we know that now every eigenvalue of $A_n^{\alpha,\beta}$ lies in the region
\begin{equation}
\bigcup_{k=1}^n\big\{z\in\Cset\ \Big|\ |z-k^{\alpha+\beta}|\leq \rho((F_n^\beta E_n\Lambda_n)(F_n^\beta E_n\Lambda_n)^T)-k^{\alpha+\beta}\big\},
\end{equation}
where $\rho((F_n^\beta E_n\Lambda_n)(F_n^\beta E_n\Lambda_n)^T)$ is the spectral radius of the matrix $(F_n^\beta E_n\Lambda_n)(F_n^\beta E_n\Lambda_n)^T$. Since the matrix $(F_n^\beta E_n\Lambda_n)(F_n^\beta E_n\Lambda_n)^T$ is clearly positive semidefinite, we have
\begin{align}
\rho((F_n^\beta E_n\Lambda_n)(F_n^\beta E_n\Lambda_n)^T)=\matrixnorm{(F_n^\beta E_n\Lambda_n)(F_n^\beta E_n\Lambda_n)^T}_S
\leq \matrixnorm{F^\beta}^2_S\cdot\matrixnorm{E_n^TE_n}_S\cdot\matrixnorm{\Lambda_n\Lambda_n^T}_S\notag\\
=T_n\cdot\max_{1\leq i\leq n}i^{2\beta}\cdot\max_{1\leq i\leq n}|J_{\alpha-\beta}(i)|=T_n\cdot\max\{1,n^{2\beta}\}\cdot\max_{1\leq i\leq n}|J_{\alpha-\beta}(i)|.
\end{align}
The matrix $A_n^{\alpha,\beta}$ is real and symmetric which means that all its eigenvalues are real. So we have proven that every eigenvalue of $A_n^{\alpha,\beta}$ lies in the region
\begin{equation}
\bigcup_{k=1}^n\big\{z\in\Rset\ \Big|\ |z-k^{\alpha+\beta}|\leq T_n\cdot\max\{1,n^{2\beta}\}\cdot\max_{1\leq i\leq n}|J_{\alpha-\beta}(i)|-k^{\alpha+\beta}\big\}.
\end{equation}
By removing the absolute value function we obtain that every eigenvalue of $A_n^{\alpha,\beta}$ lies in the real interval
\begin{equation}
\bigcup_{k=1}^n\big[2k^{\alpha+\beta}-T_n\cdot\max\{1,n^{2\beta}\}\cdot\max_{1\leq i\leq n}|J_{\alpha-\beta}(i)|\ ,\ T_n\cdot\max\{1,n^{2\beta}\}\cdot\max_{1\leq i\leq n}|J_{\alpha-\beta}(i)|\big].
\end{equation}
It is obvious that the minimum of the lower bounds of these intervals is obtained in the case when the term $2k^{\alpha+\beta}$ is smallest. Since the sequence $(k^{\alpha+\beta})_{k=1}^\infty$ is monotone, the smallest value is obtained either when $k=1$ or when $k=n$. Thus either the first or the last interval contains all the other intervals, which means that the union of the $n$ intervals is the interval
\begin{equation}
\Big[2\min\{1,n^{\alpha+\beta}\}-T_n\cdot\max\{1,n^{2\beta}\}\cdot\max_{1\leq i\leq n}|J_{\alpha-\beta}(i)|\ ,\ T_n\cdot\max\{1,n^{2\beta}\}\cdot\max_{1\leq i\leq n}|J_{\alpha-\beta}(i)|\Big].
\end{equation}
Thus we have proven the claim.
\end{proof}

\begin{rem}
Theorem \ref{thm2} is not very useful in case when $\beta>0$, since in this case the term $\max\{1,n^{2\beta}\}$ often becomes large.

\end{rem}

\begin{ex}
Let $n=4$, $\alpha=-1$ and $\beta=-1$. Then we obtain
\begin{equation}
A_4^{-1,-1}=\left[ \begin{array}{cccc}
1 & \frac{1}{2} & \frac{1}{3} & \frac{1}{4} \\
\frac{1}{2} & \frac{1}{4} & \frac{1}{6} & \frac{1}{8} \\
\frac{1}{3} & \frac{1}{6} & \frac{1}{9} & \frac{1}{12} \\
\frac{1}{4} & \frac{1}{8} & \frac{1}{12} & \frac{1}{16} \\
\end{array} \right].
\end{equation}
Now $\min\{1,4^{(-1)+(-1)}\}=\frac{1}{16}$, $\max\{1,4^{2\cdot(-1)}\}=1$, $\max_{1\leq i\leq 4}|J_0(i)|=|J_0(1)|=1$ and thus by Theorem \ref{thm2} we know that the eigenvalues of $A_4^{-1,-1}$ lie in the interval
\begin{equation}
\Big[2\cdot\frac{1}{16}-T_4\cdot1\cdot1\ ,\ T_4\cdot1\cdot1\Big]\subset[-5.66,5.79].
\end{equation}
Direct calculation shows that this really is the case, since $A_4^{-1,-1}$ has $0$ as an eigenvalue of multiplicity $3$ and the only nonzero eigenvalue is $1.42361$.
\end{ex}

The following corollary is a direct consequece of Theorem \ref{thm2}.

\begin{cor}
If $\lambda$ is an eigenvalue of the matrix $A_n^{\alpha,\beta}$, then
\begin{equation}
|\lambda|\leq T_n\cdot\max\{1,n^{2\beta}\}\cdot\max_{1\leq i\leq n}|J_{\alpha-\beta}(i)|.
\end{equation}
\end{cor}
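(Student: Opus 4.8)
The plan is to obtain the conclusion directly from Theorem \ref{thm2}; essentially no new work is required beyond symmetrizing the interval it produces. Write
\[
M = T_n\cdot\max\{1,n^{2\beta}\}\cdot\max_{1\leq i\leq n}|J_{\alpha-\beta}(i)|
\]
for the right endpoint of the interval in Theorem \ref{thm2}, and note that $M>0$ since each of the three factors is positive (in particular $J_{\alpha-\beta}(1)=1$). By Theorem \ref{thm2} every eigenvalue $\lambda$ of $A_n^{\alpha,\beta}$ satisfies
\[
2\min\{1,n^{\alpha+\beta}\}-M \;\leq\; \lambda \;\leq\; M.
\]
The right-hand inequality already gives the upper half $\lambda\leq M$ of the desired bound $|\lambda|\leq M$.

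For the lower half I would observe that the left endpoint of the interval never drops below $-M$. Indeed $\min\{1,n^{\alpha+\beta}\}$ equals either $1$ or $n^{\alpha+\beta}$, and since $n\geq 1$ we have $n^{\alpha+\beta}>0$ regardless of the sign of $\alpha+\beta$; hence $2\min\{1,n^{\alpha+\beta}\}>0$ and
\[
-M \;<\; 2\min\{1,n^{\alpha+\beta}\}-M \;\leq\; \lambda.
\]
Combining the two displays yields $-M<\lambda\leq M$, and therefore $|\lambda|\leq M$, which is precisely the claim.

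There is no real obstacle here: the only substantive point is the strict positivity of $\min\{1,n^{\alpha+\beta}\}$, which lets us discard the nonnegative term $2\min\{1,n^{\alpha+\beta}\}$ from the lower endpoint and so center the enclosing interval at the origin. Because Theorem \ref{thm2} is valid for all real $\alpha,\beta$ — it does not invoke the positive definiteness exploited in Theorem \ref{thm1} — the corollary likewise holds without any sign restriction on $\alpha$ or $\beta$.
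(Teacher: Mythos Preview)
Your argument is correct and is exactly the intended one: the paper states the corollary as a direct consequence of Theorem \ref{thm2} without giving any further proof, and your observation that $2\min\{1,n^{\alpha+\beta}\}>0$ is precisely what collapses the interval of Theorem \ref{thm2} into $[-M,M]$.
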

\bigskip

\noindent{\bf Acknowledgements} We thank Jesse Railo for Scilab calculations concerning Conjecture \ref{conj}. We also wish to thank the referee for valuable remarks which helped to improve this paper.

\newpage

\end{document}